\newenvironment{msc}%
{\par\quotation\noindent{\sectfont MSC (2020):}\ }{\par}
\newcommand{\mscLink}[1]{\href{https://www.ams.org/mathscinet/msc/msc2020.html?t=#1}{#1}}
\newenvironment{keywords}%
{\par\quotation\noindent{\sectfont Keywords:}\ }{\par}
\setlist[enumerate]{label=(\alph*)}
\renewcommand\phi\varphi
\newcommand\eps\varepsilon
\newcommand\tmpcclin{\TT_{\text{MPCC}}^{\text{lin}}}
\newcommand\tnlplin[1]{\TT_{\text{NLP}(#1)}^{\text{lin}}}
\definecolor{todocolor}{rgb}{1.0,0.3,0.3}
\title{%
A new elementary proof for M-stationarity under MPCC-GCQ
for mathematical programs with complementarity constraints}
\shorttitle{A new elementary proof for M-stationarity}
\author{Felix Harder
	\thanks{
		Institute of Mathematics, Chair of Optimal Control.
		Brandenburgische Technische Universität Cottbus--Senftenberg, Germany.
		\email{harder@b-tu.de}.
	}
}
\date{2021-10-14}
\begin{document}
\maketitle
\begin{abstract}
	It is known in the literature that local 
	minimizers of mathematical programs with complementarity constraints
	(MPCCs) are so-called M-stationary points,
	if a weak MPCC-tailored Guignard constraint qualification 
	(called MPCC-GCQ) holds.
	In this paper we present a new elementary proof for this result.
	Our proof is significantly simpler than existing proofs
	and does not rely on deeper technical theory
	such as calculus rules for limiting normal cones.
	A crucial ingredient is a proof of a
	(to the best of our knowledge previously open) conjecture,
	which was formulated in a Diploma thesis by Schinabeck.
\end{abstract}

\begin{keywords}
	Mathematical program with complementarity constraints, 
	Necessary optimality conditions,
	M-stationarity,
	Guignard constraint qualification
\end{keywords}

\begin{msc}
	\mscLink{90C33}, 
	\mscLink{90C30} 
\end{msc}


\section{Introduction}

We consider mathematical programs with complementarity constraints,
or MPCCs for short,
which are nonlinear optimization problems of the form
\begin{equation}
	\label{eq:mpcc}
	\tag{MPCC}
	\begin{aligned}
		\min_{x\in\R^n} \quad& f(x)
		\\
		\text{s.t.}\quad&
		\begin{aligned}[t]
			g(x)&\leq 0,
			& h(x)&=0,
			\\
			G(x) &\geq0,
			& H(x) &\geq 0,
			& G(x)^\top H(x) &=0.
		\end{aligned}
	\end{aligned}
\end{equation}
Here, $f:\R^n\to\R$, $g:\R^n\to\R^l$, $h:\R^n\to\R^m$,
$G,H:\R^n\to\R^p$
are differentiable functions.

MPCCs have been studied extensively in the literature,
both from a numerical and a theoretical perspective.
Various problem-tailored stationarity conditions and constraint qualifications
have been developed.
One (necessary, first-order) 
stationarity condition for MPCCs is the so-called strong stationarity
(which is equivalent to the KKT conditions).
However, the strong stationarity condition is often too strong,
as there are examples of MPCCs where the data
is linear, but the unique minimizer is not strongly stationary,
see \cite[Example~3]{ScheelScholtes2000}.

In this article we will focus on M-stationarity (see \cref{def:mstat}),
which is another prominent stationarity condition
for MPCCs in the literature.
We mention that there are other stationarity conditions between
M- and strong stationarity, such as
extended M-stationarity, strong M-stationarity,
$\QQ_M$-stationarity, and linearized M-stationarity,
see \cite{Gfrerer2014,BenkoGfrerer2017,Gfrerer2018}.
However, these conditions are used less frequently than
M-stationarity and are more complicated to formulate.

As shown in \cite{FlegelKanzowOutrata2006},
M-stationarity holds for local minimizers of \eqref{eq:mpcc}
under MPCC-GCQ, see also \cite{FlegelKanzow2006:1}.
MPCC-GCQ, defined in \cref{def:mpcc_gcq},
is a relatively weak constraint qualification.
These proofs for M-stationarity under MPCC-GCQ rely on
advanced techniques from variational analysis such as
the concept of the so-called limiting normal cone.
In particular, calculus rules for limiting normal cones
are used, which are based on deeper technical theory
and require to verify the calmness of certain set-valued mappings.


We mention that there is also a weaker constraint qualification
in \cite[(6)]{GuoLin2012}, which is the weakest constraint
qualification under which local minimizers are M-stationary.
An advantage is that the proof of M-stationarity under this 
constraint qualification is very simple,
but on the other hand the proof that MPCC-GCQ implies this constraint qualification
(or that this constraint qualification is satisfied for
MPCCs with affine data) relies on deeper results
from variational analysis.
Since MPCC-GCQ and MPCC-ACQ (which trivially implies MPCC-GCQ)
are used more frequently, we focus on MPCC-GCQ in this article.

In this paper we want to present
a new proof for 
the result from \cite{FlegelKanzowOutrata2006}
that M-stationarity holds for local minimizers under MPCC-GCQ.
Our new proof does not rely on advanced theory 
such as the properties of limiting normal cones.
Our proof is significantly simpler than any existing proofs
that we are aware of.
%
The main ingredient of the new proof
is a result which can be found in
\cref{lem:schinabeck}. 
This result was already conjectured in \cite[Section~4.4.2]{Schinabeck2009:1},
and, to the best of our knowledge there has not been a proof of this
conjecture so far.
With the knowledge that \cref{lem:schinabeck} holds,
the rest of the proof of
M-stationarity under MPCC-GCQ will not be particularly surprising
for readers familiar with the implications of MPCC-GCQ.
For the convenience of the reader we give a self-contained presentation, 
which only requires familiarity with the basic theory of nonlinear optimization.
Since the constraint qualification in \cite[(6)]{GuoLin2012}
is the weakest constraint qualification under which
local minimizers are M-stationary, the new proof
also leads to a completely elementary proof that
MPCC-GCQ implies \cite[(6)]{GuoLin2012}.


The structure of this paper is as follows:
In \cref{sec:definitions}, we introduce classical definitions related to MPCCs.
Then we use MPCC-GCQ to construct various multipliers
that satisfy some auxiliary stationarity systems, see \cref{lem:astat}.
Afterwards, these multipliers are combined into
a multiplier which is M-stationary with the help
of \cref{lem:schinabeck}.
The main result is then stated in \cref{thm:mstat}.
Finally, we discuss conclusions and perspectives in
\cref{sec:conclusion}.

\section{Definitions}
\label{sec:definitions}

It will be convenient to work with the index sets
\begin{align*}
	I^l &:=\set{1,\ldots,l},\qquad
	I^m :=\set{1,\ldots,m},\qquad
	I^p :=\set{1,\ldots,p},
	\\
	I^g(\bar x)&:=\{i\in I^l\,|\,g_i(\bar x)=0\},\\
	I^{+0}(\bar x)&:=\{i\in I^p\,|\,G_i(\bar x)>0\,\land\,H_i(\bar x)=0\},\\
	I^{0+}(\bar x)&:=\{i\in I^p\,|\,G_i(\bar x)=0\,\land\,H_i(\bar x)>0\},\\
	I^{00}(\bar x)&:=\{i\in I^p\,|\,G_i(\bar x)=0\,\land\,H_i(\bar x)=0\},
\end{align*}
where $\bar x\in\R^n$ is a feasible point of \eqref{eq:mpcc}.
Note that $I^{+0}(\bar x)$, $I^{0+}(\bar x)$, $I^{00}(\bar x)$ 
form a partition of $I^p$.
We continue with the definition of M-, A- and S-stationarity.
%
\begin{definition}
	\label{def:mstat}
	Let $\bar x\in\R^n$ be a feasible point of \eqref{eq:mpcc}.
	We call $\bar x$ an \emph{M-stationary} point of \eqref{eq:mpcc}
	if there exist multipliers
	$\bar\lambda\in\R^l$, $\bar\eta\in\R^m$,
	$\bar\mu,\bar\nu\in\R^p$ with
	\begin{subequations}
		\begin{align}
			\label{eq:wstat_x}
			&&&&\mathllap{
			\nabla f(\bar x) 
			+ \sum_{i\in I^l} \bar\lambda_i\nabla g_i(\bar x)
			+ \sum_{i\in I^m} \bar\eta_i\nabla h_i(\bar x)
			- \sum_{i\in I^p} \paren[\big]{\bar\mu_i\nabla G_i(\bar x)
			+ \bar\nu_i\nabla H_i(\bar x)}
		}
			&=0, \\
			&\hspace{9em}&&\forall i\in I^g(\bar x):
			&
			\bar\lambda_i &\geq0,
			\\
			&&&\forall i\in I^l\setminus I^g(\bar x):
			&
			\bar\lambda_i &=0,
			\\
			&&&\forall i\in I^{+0}(\bar x):
			&
			\bar\mu_i  &=0,
			\\
			&&&\forall i\in I^{0+}(\bar x):
			&
			\bar\nu_i  &=0,
			\label{eq:wstat_0+}
			\\
			&&&\forall i\in I^{00}(\bar x):
			&
			(\bar\mu_i>0 \land \bar\nu_i>0)
			\lor \bar\mu_i\bar\nu_i &=0.
			\label{eq:mstat_00}
		\end{align}
	\end{subequations}
	If the multipliers $\bar\lambda,\bar\eta,\bar\mu,\bar\nu$ only satisfy 
	\eqref{eq:wstat_x}--\eqref{eq:wstat_0+}
	and $\bar\mu_i\geq0\lor\bar\nu_i\geq0$ 
	holds for all $i\in I^{00}(\bar x)$,
	then $\bar x$ is called an \emph{A-stationary} point of \eqref{eq:mpcc}.
	If, additionally, $\bar\mu_i\geq0\land\bar\nu_i\geq0$
	holds for all $i\in I^{00}(\bar x)$, then
	$\bar x$ is called a \emph{strongly stationary} or \emph{S-stationary}
	point of \eqref{eq:mpcc}.
\end{definition}
These stationarity conditions can be found in
\cite[Definitions~2.5--2.7]{Ye2005}.
Other known stationarity conditions
B-, W- and C-stationarity, see \cite[Definitions~2.2--2.4]{Ye2005}.

In preparation for the definition of MPCC-GCQ
we define some cones.
\begin{definition}
	\label{def:general}
	Let $\bar x\in\R^n$ be a feasible point of \eqref{eq:mpcc}.
	\begin{enumerate}
		\item
			\label{item:tangent_cone}
			We define the \emph{tangent cone} of \eqref{eq:mpcc}
			at $\bar x$ via
			\begin{equation*}
				\TT(\bar x):=\set*{ d\in\R^n
					\given
					\begin{aligned}
						&\exists \set{x_k}_{k\in\N}\subset F,\,
						\exists \set{t_k}_{k\in\N}\subset(0,\infty):
						\\ &\qquad
						x_k\to \bar x,\; t_k\downto 0,\;
						t_k^{-1}(x_k-\bar x)\to d
					\end{aligned}
				},
			\end{equation*}
			where $F\subset \R^n$ denotes the feasible set of \eqref{eq:mpcc}.
		\item
			\label{item:mpcclin}
			We define the \emph{MPCC-linearized tangent cone}
			$\tmpcclin(\bar x)\subset\R^n$ at $\bar x$ via
			\begin{equation*}
				\tmpcclin(\bar x):=
				\set*{ d\in\R^n
					\given
					\begin{aligned}
						\nabla g_i(\bar x)^\top d &\leq 0
						\qquad \forall i\in I^g(\bar x),
						\\
						\nabla h_i(\bar x)^\top d &= 0
						\qquad \forall i\in I^m,
						\\
						\nabla G_i(\bar x)^\top d &= 0
						\qquad \forall i\in I^{0+}(\bar x),
						\\
						\nabla H_i(\bar x)^\top d &= 0
						\qquad \forall i\in I^{+0}(\bar x),
						\\
						\nabla G_i(\bar x)^\top d &\geq 0
						\qquad \forall i\in I^{00}(\bar x),
						\\
						\nabla H_i(\bar x)^\top d &\geq 0
						\qquad \forall i\in I^{00}(\bar x),
						\\
						(\nabla G_i(\bar x)^\top d )(\nabla H_i(\bar x)^\top d)&= 0
						\qquad \forall i\in I^{00}(\bar x)
					\end{aligned}
				}.
			\end{equation*}
	\end{enumerate}
\end{definition}
%
Note that in general
$\TT(\bar x)$ and $\tmpcclin(\bar x)$ are
nonconvex cones.

We also recall that the polar cone $C\polar$ of a set $C\subset\R^n$
is defined via
\begin{equation*}
	C\polar:=\set{d\in\R^n\given d^\top y\leq0 \quad\forall y\in C}.
\end{equation*}
Now we are ready to give the definition of
MPCC-GCQ, which can also be found in
\cite[(41)]{FlegelKanzowOutrata2006},
where it is called MPEC-GCQ.
\begin{definition}
	\label{def:mpcc_gcq}
	Let $\bar x\in\R^n$ be a feasible point of \eqref{eq:mpcc}.
	We say that $\bar x$ satisfies the
	\emph{MPCC-tailored Guignard constraint qualification},
	or \emph{MPCC-GCQ}, if
	\begin{equation*}
		\TT(\bar x)\polar = \tmpcclin(\bar x)\polar
	\end{equation*}
	holds.
	Additionally, if $\TT(\bar x)=\tmpcclin(\bar x)$ holds
	then we say that $\bar x$ satisfies \emph{MPCC-ACQ}.
\end{definition}
Clearly, MPCC-ACQ implies MPCC-GCQ.
We mention that there are also other stronger constraint qualifications
(such as MPCC-MFCQ if $g$, $h$, $G$, $H$ are continuously differentiable,
defined in \cite[Definition~2.1]{FlegelKanzow2005:3})
which imply MPCC-ACQ or MPCC-GCQ and are sometimes easier to verify,
see e.g.\ \cite[Theorem~3.2]{Ye2005}.
In particular, we emphasize that MPCC-GCQ (and MPCC-ACQ)
are satisfied at every feasible point of \eqref{eq:mpcc}
if the functions $g$, $h$, $G$, $H$ are affine,
see \cite[Theorem~3.2]{FlegelKanzow2005:3}.

%
%

\section{M-stationarity under MPCC-GCQ}
\label{sec:mstat}

%
We start with a \lcnamecref{lem:astat}
that generates several multipliers which satisfy
a slightly stronger stationarity condition than A-stationarity.
The result can also be obtained from the proof of
\cite[Theorem~3.4]{FlegelKanzow2005:3},
with the minor difference that we only require MPCC-GCQ
and not MPCC-ACQ.
%
\begin{proposition}
	\label{lem:astat}
	Let $\bar x\in\R^n$ be a local minimizer of \eqref{eq:mpcc}
	that satisfies MPCC-GCQ
	and let $\alpha\in\set{1,2}^p$ be given.
	Then there exist multipliers
	$\lambda^\alpha\in\R^l$, $\eta^\alpha\in\R^m$,
	$\mu^\alpha,\nu^\alpha\in\R^p$ with
	\begin{subequations}
		\label{eq:astat}
		\begin{align}
			\label{eq:astat_x}
			&&&&\mathllap{
			\nabla f(\bar x) 
			+ \sum_{i\in I^l} \lambda_i^\alpha\nabla g_i(\bar x)
			+ \sum_{i\in I^m} \eta_i^\alpha\nabla h_i(\bar x)
			- \sum_{i\in I^p}\paren[\big]{ \mu_i^\alpha\nabla G_i(\bar x)
			+  \nu_i^\alpha\nabla H_i(\bar x)}
		}
			&=0, \\
			&\hspace{16em}&&\forall i\in I^g(\bar x):
			&
			\lambda_i^\alpha &\geq0,
			\\
			&&&\forall i\in I^l\setminus I^g(\bar x):
			&
			\lambda_i^\alpha &=0,
			\\
			&&&\forall i\in I^{+0}(\bar x):
			&
			\mu_i^\alpha  &=0,
			\\
			&&&\forall i\in I^{0+}(\bar x):
			&
			\nu_i^\alpha  &=0,
			\label{eq:astat_0+}
			\\
			&&&\forall i\in I^{00}(\bar x),\; \alpha_i=1:
			&
			\quad\mu_i^\alpha &\geq0,
			\\
			&&&\forall i\in I^{00}(\bar x),\; \alpha_i=2:
			&
			\quad\nu_i^\alpha &\geq0.
		\end{align}
	\end{subequations}
\end{proposition}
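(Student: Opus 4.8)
The plan is to introduce, for the given $\alpha\in\set{1,2}^p$, the linearized tangent cone $\tnlplin{\alpha}(\bar x)$ of the nonlinear program obtained from \eqref{eq:mpcc} by fixing on the bi-active set which constraint is forced to vanish: for $i\in I^{00}(\bar x)$ with $\alpha_i=1$ one imposes $\nabla G_i(\bar x)^\top d\geq 0$ and $\nabla H_i(\bar x)^\top d=0$, for $\alpha_i=2$ one imposes $\nabla G_i(\bar x)^\top d=0$ and $\nabla H_i(\bar x)^\top d\geq 0$, and all other defining relations are taken verbatim from $\tmpcclin(\bar x)$. First I would record two elementary facts: $\tnlplin{\alpha}(\bar x)$ is a polyhedral (hence closed, convex) cone, and $\tnlplin{\alpha}(\bar x)\subseteq\tmpcclin(\bar x)$, since on $I^{00}(\bar x)$ each branch choice implies the sign constraints and the complementarity relation $(\nabla G_i(\bar x)^\top d)(\nabla H_i(\bar x)^\top d)=0$ appearing in $\tmpcclin(\bar x)$.

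Next I would exploit that $\bar x$ is a local minimizer of \eqref{eq:mpcc}. By the standard first-order argument along tangent sequences (expanding $f$ to first order), $-\nabla f(\bar x)\in\TT(\bar x)\polar$. MPCC-GCQ gives $\TT(\bar x)\polar=\tmpcclin(\bar x)\polar$, and the inclusion of cones above yields $\tmpcclin(\bar x)\polar\subseteq\tnlplin{\alpha}(\bar x)\polar$; chaining these three statements gives $-\nabla f(\bar x)\in\tnlplin{\alpha}(\bar x)\polar$.

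It then remains to turn this membership into multipliers. Since $\tnlplin{\alpha}(\bar x)$ is cut out by finitely many inequalities $\nabla g_i(\bar x)^\top d\leq 0$ ($i\in I^g(\bar x)$), $-\nabla G_i(\bar x)^\top d\leq 0$ (the $\alpha_i=1$ part of $I^{00}(\bar x)$), $-\nabla H_i(\bar x)^\top d\leq 0$ (the $\alpha_i=2$ part of $I^{00}(\bar x)$), together with the equalities $\nabla h_i(\bar x)^\top d=0$, $\nabla G_i(\bar x)^\top d=0$ ($i\in I^{0+}(\bar x)$ and the $\alpha_i=2$ part of $I^{00}(\bar x)$), $\nabla H_i(\bar x)^\top d=0$ ($i\in I^{+0}(\bar x)$ and the $\alpha_i=1$ part of $I^{00}(\bar x)$), the Farkas lemma (equivalently, the Minkowski--Weyl description of a polyhedral cone) identifies $\tnlplin{\alpha}(\bar x)\polar$ with the sum of the nonnegative cone spanned by the inequality normals and the linear span of the equality normals. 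Writing $-\nabla f(\bar x)$ in this form and renaming the coefficients as $\lambda^\alpha,\eta^\alpha,\mu^\alpha,\nu^\alpha$ yields exactly \eqref{eq:astat}: the coefficients over $I^g(\bar x)$ are nonnegative, those outside $I^g(\bar x)$ vanish, $\mu_i^\alpha=0$ on $I^{+0}(\bar x)$ and $\nu_i^\alpha=0$ on $I^{0+}(\bar x)$, while over $I^{00}(\bar x)$ the coefficient of the relevant inequality normal is nonnegative (so $\mu_i^\alpha\geq 0$ if $\alpha_i=1$ and $\nu_i^\alpha\geq 0$ if $\alpha_i=2$) and the complementary multiplier is unrestricted. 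The structural part of the argument is short; I expect the only real work to be the sign-and-index bookkeeping in this last step, together with invoking the (standard) polar-cone formula for a polyhedral cone.
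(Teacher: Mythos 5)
Your proposal is correct and follows essentially the same route as the paper's proof: first-order optimality plus MPCC-GCQ to get $-\nabla f(\bar x)\in\tmpcclin(\bar x)\polar$, restriction to the polyhedral branch cone $\tnlplin{\alpha}(\bar x)$ (whose polar contains $\tmpcclin(\bar x)\polar$ by the inclusion of cones), and Farkas' Lemma to read off the multipliers with the stated signs. The paper defines $\tnlplin{\alpha}(\bar x)$ by adding the equality constraints to $\tmpcclin(\bar x)$ and noting the complementarity condition becomes redundant, which is the same cone you describe explicitly.
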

\begin{proof}
	From \cref{def:general}~\ref{item:tangent_cone}
	and the fact that $\bar x$ is a local minimizer of
	\eqref{eq:mpcc} it can be concluded that
	the condition
	\begin{equation*}
		\nabla f(\bar x)^\top d\geq0
		\qquad\forall\, d\in\TT(\bar x)
	\end{equation*}
	is satisfied.
	Using polar cones and MPCC-GCQ, we obtain
	\begin{equation*}
		-\nabla f(\bar x)\in \TT(\bar x)\polar
		= \tmpcclin(\bar x)\polar.
	\end{equation*}
	Furthermore, we define
	the cone $\tnlplin{\alpha}(\bar x)\subset \tmpcclin(\bar x)$ 
	via
	\begin{equation*}
		\tnlplin{\alpha}(\bar x):=
		\set*{d\in\tmpcclin(\bar x)\given
			\begin{aligned}
				(\alpha_i=1\Rightarrow \nabla H_i(\bar x)^\top d &=0) 
				\;&&\forall i\in I^{00}(\bar x),
				\\
				(\alpha_i=2\Rightarrow\nabla G_i(\bar x)^\top d &=0) 
				\;&&\forall i\in I^{00}(\bar x)
			\end{aligned}
		}.
	\end{equation*}
	Using the definition of polar cones, the inclusion
	$\tmpcclin(\bar x)\polar\subset\tnlplin{\alpha}(\bar x)\polar$
	follows from
	$\tnlplin{\alpha}(\bar x)\subset\tmpcclin(\bar x)$.
	In particular, we have
	$-\nabla f(\bar x)\in\tnlplin{\alpha}(\bar x)\polar$.
	Note that the condition
	$(\nabla G_i(\bar x)^\top d )(\nabla H_i(\bar x)^\top d)= 0
	\;\forall i\in I^{00}(\bar x)$
	from $\tmpcclin(\bar x)$ is redundant in $\tnlplin{\alpha}(\bar x)$,
	and therefore $\tnlplin{\alpha}(\bar x)$ is a convex
	and polyhedral cone (unlike $\tmpcclin(\bar x)$).
	Thus, one can calculate its polar cone 
	(e.g.\ using Farkas' Lemma),
	which results in
	\begin{equation*}
		\tnlplin{\alpha}(\bar x)\polar
		= \set*{
			\begin{aligned}
				&\sum_{i\in I^g(\bar x)} \lambda_i^\alpha \nabla g_i(\bar x)
				+ \sum_{i\in I^m} \eta_i^\alpha \nabla h_i(\bar x)
				\\
				&\quad -
				\sum_{i\in I^{0+}(\bar x)\cup I^{00}(\bar x)} \mu_i^\alpha \nabla G_i(\bar x)
				\\ &\quad -
				\sum_{i\in I^{+0}(\bar x)\cup I^{00}(\bar x)} \nu_i^\alpha \nabla H_i(\bar x)
			\end{aligned}
			\given
			\begin{aligned}
				\lambda_i^\alpha &\geq0,\quad i\in I^g(\bar x),
				\\
				\eta_i^\alpha &\in\R,\quad i\in I^m,
				\\
				\mu_i^\alpha &\in\R,\quad i\in I^{0+}(\bar x)\cup I^{00}(\bar x),
				\\
				\nu_i^\alpha &\in\R,\quad i\in I^{+0}(\bar x)\cup I^{00}(\bar x),
				\\
				\mu_i^\alpha &\geq0,\quad\text{if}\;\alpha_i=1,\,i\in I^{00}(\bar x),
				\\
				\nu_i^\alpha &\geq0,\quad\text{if}\;\alpha_i=2,\,i\in I^{00}(\bar x)
			\end{aligned}
		}.
	\end{equation*}
	Then the result follows from
	$-\nabla f(\bar x)\in\tnlplin{\alpha}(\bar x)\polar$
	by setting the remaining components of the multipliers
	(i.e.\ $\lambda_i^\alpha$ for $i\in I^p\setminus I^g(\bar x)$,
		$\mu_i^\alpha$ for $i\in I^{+0}(\bar x)$, 
	$\nu_i^\alpha$ for $i\in I^{0+}(\bar x)$)
	to zero.
\end{proof}
We mention that if $\bar x$ satisfies \eqref{eq:astat}
for some $\alpha\in\set{1,2}^p$ and suitable multipliers,
then $\bar x$ is an A-stationary point of \eqref{eq:mpcc}.
However, the statement of \cref{lem:astat} is stronger than A-stationarity,
namely for each index in $I^{00}(\bar x)$ we can choose whether 
$\mu_i^\alpha$ or $\nu_i^\alpha$ is nonnegative.
Note that \eqref{eq:wstat_x}--\eqref{eq:wstat_0+}
are already satisfied by all $2^p$ possible choices for the multipliers
and any convex combination of these.
Thus, the question naturally arises whether
a convex combination of these multipliers can be found that
also satisfies \eqref{eq:mstat_00}.
As the next result shows, this is indeed possible.
The following \lcnamecref{lem:schinabeck} 
was already stated as a conjecture in 
\cite[Section~4.4.2]{Schinabeck2009:1}.
To the best of our knowledge, 
this conjecture has not been proven before.
%
\begin{lemma}
	\label{lem:schinabeck}
	Let $\hat I\subset I^p$ be an index set.
	For all $\alpha\in\set{1,2}^p$,
	let points $(\mu^\alpha,\nu^\alpha)\in A^\alpha$ be given, 
	where the set $A^\alpha$ is described via
	\begin{equation*}
		A^\alpha :=
		\set{(\mu,\nu)\in\R^{2p}\given 
			\mu_i\geq0\;\text{if}\;\alpha_i=1,\,
			\nu_i\geq0\;\text{if}\;\alpha_i=2
			\quad\forall i\in\hat I
		}.
	\end{equation*}
	Then there exists a point $(\bar\mu,\bar\nu)$ in the set
	\begin{equation*}
		B :=\conv\set[\big]{(\mu^\alpha,\nu^\alpha)\given \alpha\in\set{1,2}^p}
		\subset \R^{2p}
	\end{equation*}
	of convex combinations of these points, such that
	for all $i\in \hat I$ we have the condition
	\begin{equation}
		\label{eq:m_stat}
		(\bar\mu_i>0\land\bar\nu_i>0) \lor \bar\mu_i\bar\nu_i=0.
	\end{equation}
\end{lemma}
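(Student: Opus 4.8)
The plan is to prove the statement by induction on the cardinality of the index set $\hat I$. If $\hat I = \emptyset$, any single point $(\mu^\alpha,\nu^\alpha)$ works, since the condition \eqref{eq:m_stat} is vacuous. For the inductive step, suppose the claim holds whenever the relevant index set has fewer than $k$ elements, and let $|\hat I| = k$. Pick an index $j \in \hat I$ and set $\hat I' := \hat I \setminus \{j\}$. The idea is to first apply the induction hypothesis on $\hat I'$ to two suitably chosen \enquote{halves} of the family $\{(\mu^\alpha,\nu^\alpha)\}$, obtaining two convex combinations, and then take a final convex combination of those two points that fixes up the condition at the single remaining index $j$.

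More concretely, partition $\{1,2\}^p$ according to the value of the $j$-th coordinate: let $S_1 := \{\alpha : \alpha_j = 1\}$ and $S_2 := \{\alpha : \alpha_j = 2\}$. The subfamily $\{(\mu^\alpha,\nu^\alpha)\}_{\alpha \in S_1}$, reindexed over $\{1,2\}^{p}$ by fixing $\alpha_j = 1$, satisfies the membership condition of the lemma with the smaller index set $\hat I'$ in place of $\hat I$ — and moreover every point in this subfamily has $\mu^\alpha_j \geq 0$; hence any convex combination $(\mu^1,\nu^1)$ produced by the induction hypothesis satisfies \eqref{eq:m_stat} for all $i \in \hat I'$ and additionally has $\mu^1_j \geq 0$. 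Symmetrically, applying the induction hypothesis to the subfamily over $S_2$ yields $(\mu^2,\nu^2) \in \conv\{(\mu^\alpha,\nu^\alpha) : \alpha \in S_2\}$ satisfying \eqref{eq:m_stat} for all $i \in \hat I'$ and with $\nu^2_j \geq 0$.

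It remains to combine $(\mu^1,\nu^1)$ and $(\mu^2,\nu^2)$ into a point $(\bar\mu,\bar\nu)$ that also satisfies \eqref{eq:m_stat} at the index $j$, while not destroying it at the indices in $\hat I'$. For a convex combination $(\bar\mu,\bar\nu) = (1-t)(\mu^1,\nu^1) + t(\mu^2,\nu^2)$ with $t \in [0,1]$, one checks the $j$-th coordinate: $\bar\mu_j = (1-t)\mu^1_j + t\mu^2_j$ and $\bar\nu_j = (1-t)\nu^1_j + t\nu^2_j$. Here a short case distinction on the signs of $\mu^2_j$ and $\nu^1_j$ (recalling $\mu^1_j \geq 0$ and $\nu^2_j \geq 0$) produces an admissible value of $t$: for instance, if $\nu^1_j \leq 0$ one can try $t = 0$ or push $t$ small enough that $\bar\nu_j \leq 0$ while $\bar\mu_j \geq 0$, giving $\bar\mu_j\bar\nu_j \leq 0$; but since we need exactly $\bar\mu_j\bar\nu_j = 0$ or both strictly positive, one chooses $t$ on the boundary of the sign change, i.e.\ so that one of $\bar\mu_j, \bar\nu_j$ vanishes, unless both are already forced positive. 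This is the delicate point of the argument, and the main obstacle I anticipate: one must verify that for \emph{every} sign configuration of the four relevant scalars, such a $t \in [0,1]$ exists making the $j$-th condition hold. The preservation of \eqref{eq:m_stat} at the indices $i \in \hat I'$ under this final convex combination will need the elementary observation that the set of pairs $(\mu_i,\nu_i)$ satisfying $(\mu_i>0\land\nu_i>0)\lor\mu_i\nu_i=0$ in each individual coordinate is \emph{not} convex in general, so one cannot simply invoke convexity; instead one uses that both $(\mu^1,\nu^1)$ and $(\mu^2,\nu^2)$ lie in the closed region $\{\mu_i\nu_i \geq 0\}$ (a consequence of \eqref{eq:m_stat}), whose each-coordinate slice \emph{is} closed under convex combination together with a check on the zero set — so I would reformulate \eqref{eq:m_stat} as the equivalent condition $\mu_i\nu_i \geq 0$ together with $\mu_i + \nu_i \geq 0$, or more precisely $\mu_i \geq 0 \lor \nu_i \geq 0$, and verify that this reformulated condition is stable under the final convex combination, then recover \eqref{eq:m_stat} at the end from the fact that a convex combination of two points each having nonnegative coordinate-product has nonnegative coordinate-product only if... — and here one sees the need to carry $t$ to the boundary for coordinate $j$ while these other coordinates take care of themselves.
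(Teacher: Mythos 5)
Your inductive setup is fine up to the last step, but the combination step contains a genuine gap that you yourself flag and then do not close. After applying the induction hypothesis to the two subfamilies you have a single scalar parameter $t\in[0,1]$, and this one degree of freedom must simultaneously (i) produce the condition \eqref{eq:m_stat} at the distinguished index $j$ and (ii) preserve \eqref{eq:m_stat} at every $i\in\hat I'$. Requirement (i) generically pins $t$ down to a single interior value (e.g.\ if $\mu^1_j>0$, $\nu^1_j<0$, $\nu^2_j>0$, $\mu^2_j<0$, the only admissible $t$ is the root of $(1-t)\nu^1_j+t\nu^2_j=0$ or of the analogous equation for $\bar\mu_j$). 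Requirement (ii) is \emph{not} stable under convex combination and is generically destroyed for every interior $t$: if in some coordinate $i\in\hat I'$ you have $(\mu^1_i,\nu^1_i)=(1,0)$ and $(\mu^2_i,\nu^2_i)=(0,-1)$, both of which satisfy \eqref{eq:m_stat}, then $\bar\mu_i\bar\nu_i=-t(1-t)<0$ for all $t\in(0,1)$. The induction hypothesis as you state it gives no control over the outputs beyond \eqref{eq:m_stat} on $\hat I'$ and the sign at $j$, so nothing excludes the simultaneous occurrence of both situations, in which case no admissible $t$ exists. The closing claim that ``these other coordinates take care of themselves'' is therefore unjustified, and your proposed weakening to $\mu_i\nu_i\geq0$ does not help, since that weaker condition is also not preserved by convex combinations (same example). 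To make an induction of this shape work you would need a substantially stronger inductive invariant, and it is not clear what it should be.

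For comparison, the paper's proof avoids coordinate-by-coordinate repair entirely: it replaces each $(\mu^\alpha,\nu^\alpha)$ by the minimizer $(\hat\mu^\alpha,\hat\nu^\alpha)$ of the strictly convex function $\norm{(\mu,\nu)}_2^2$ over the compact set $B\cap A^\alpha$, selects $\beta$ maximizing $\norm{(\hat\mu^\alpha,\hat\nu^\alpha)}_2^2$ over $\alpha$, and shows that a failure of \eqref{eq:m_stat} at some coordinate of $(\hat\mu^\beta,\hat\nu^\beta)$ lets one move slightly toward $(\hat\mu^\gamma,\hat\nu^\gamma)$ (where $\gamma$ flips $\beta_i$) while staying in $B\cap A^\beta$, strictly decreasing the norm by strict convexity and contradicting minimality. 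That global variational argument handles all coordinates at once, which is precisely what your single parameter $t$ cannot do.
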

\begin{proof}
	Let us choose points 
	$(\hat\mu^\alpha,\hat\nu^\alpha)\in B\cap A^\alpha$,
	a vector $\beta\in\set{1,2}^p$,
	and points $\bar\mu,\bar\nu\in\R^p$
	that satisfy
	\begin{align}
		\label{eq:argmin_choice}
		(\hat\mu^\alpha,\hat\nu^\alpha) &\in
		\argmin_{(\mu,\nu)\in B\cap A^\alpha} \,\norm{(\mu,\nu)}_2^2
		\qquad\forall\alpha\in\set{1,2}^p,
		\\
		\label{eq:argmax_choice}
		\beta &\in 
		\argmax_{\alpha\in\set{1,2}^p}\; \norm{(\hat\mu^\alpha,\hat\nu^\alpha)}_2^2,
		\\
		\label{eq:hatmunu_choice}
		(\bar\mu,\bar\nu) &:= (\hat\mu^{\beta},\hat\nu^{\beta})
		\in\R^{2p}.
	\end{align}
	These choices are possible because
	the sets $B\cap A^\alpha$ is compact and nonempty
	(the nonemptiness follows from 
	$(\mu^\alpha,\nu^\alpha)\in B\cap A^\alpha$)
	and the set $\set{1,2}^p$ is finite.
	Furthermore, we have $(\bar\mu,\bar\nu)\in B$,
	i.e.\ it is a convex combination 
	as claimed.

	It remains to show that our choice $(\hat\mu^\beta,\hat\nu^\beta)$
	for $(\bar\mu,\bar\nu)$ satisfies \eqref{eq:m_stat}.
	By contradiction, we assume that there exists 
	an $i\in\hat I$ such that \eqref{eq:m_stat} is not satisfied,
	i.e.\ $\hat\mu^\beta_i\neq0$, $\hat\nu^\beta_i\neq0$,
	and $\hat\mu^\beta_i<0\lor\hat\nu^\beta_i<0$ hold.
	Without loss of generality we can assume that $\beta_i=1$ holds
	(otherwise one would exchange the roles of 
	$\mu$ and $\nu$ in the rest of the proof).
	Therefore, we have $\hat\mu^\beta_i\geq0$ due to 
	$(\hat\mu^\beta,\hat\nu^\beta)\in A^{\beta}$.
	Since \eqref{eq:m_stat} is not satisfied this implies
	$\hat\mu^\beta_i>0$ and $\hat\nu^\beta_i<0$.
	We define 
	\begin{equation*}
		\gamma\in\set{1,2}^p,
		\qquad
		\gamma_j:=
		\begin{cases}
			2 &\;\text{if}\;j=i,
			\\
			\beta_j &\;\text{if}\;j\in I^p\setminus\set{i}
		\end{cases}
		\quad\forall j\in I^p.
	\end{equation*}
	Due to $\hat\mu^\beta_i>0$ we can choose $t\in (0,1)$ such that
	the convex combination
	\begin{equation*}
		(\mu^t,\nu^t):=t(\hat\mu^{\gamma},\hat\nu^{\gamma})
		+ (1-t)(\hat\mu^\beta,\hat\nu^\beta)\in\R^{2p}
	\end{equation*}
	still satisfies $\mu^t_i>0$.
	Since $\gamma_j=\beta_j$ holds for $j\neq i$
	we also have $(\mu^t,\nu^t)\in A^{\beta}\cap B$.
	However,
	$\hat\nu^\gamma_i\geq0$ implies
	$(\hat\mu^\beta,\hat\nu^\beta)\neq(\hat\mu^{\gamma},\hat\nu^{\gamma})$.
	Thus, by also using \eqref{eq:argmax_choice}, we have
	\begin{equation*}
		\norm{(\mu^t,\nu^t)}_2^2
		< \max\set[\Big]{\norm{(\hat\mu^\beta,\hat\nu^\beta)}_2^2,
		\norm{(\hat\mu^{\gamma},\hat\nu^{\gamma})}_2^2}
		=\norm{(\hat\mu^{\beta},\hat\nu^{\beta})}_2^2.
	\end{equation*}
	Due to
	$(\mu^t,\nu^t)\in B\cap A^{\beta}$
	this is a contradiction to \eqref{eq:argmin_choice},
	which completes the proof.
\end{proof}
Instead of the function $(\mu,\nu)\mapsto \norm{(\mu,\nu)}_2^2$,
any strictly convex function on $\R^{2p}$ would have worked
in the proof.

We mention that it was recognized already in 
\cite[Section~4.4.2]{Schinabeck2009:1}
that this \lcnamecref{lem:schinabeck} would significantly
simplify the already existing proofs for M-stationarity.
%
We further mention that the idea to combine
various multipliers of the form $(\mu^\alpha,\nu^\alpha)$
was also used for the concept of $\QQ$-stationarity
in \cite{BenkoGfrerer2016:2}.

A straightforward combination of \cref{lem:astat,lem:schinabeck}
yields the desired M-stationarity result.
\begin{theorem}
	\label{thm:mstat}
	Let $\bar x\in\R^n$ be a local minimizer of \eqref{eq:mpcc}
	that satisfies MPCC-GCQ.
	Then $\bar x$ is an M-stationary point.
\end{theorem}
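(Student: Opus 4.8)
The plan is to combine \cref{lem:astat} and \cref{lem:schinabeck} directly, using the former to produce one multiplier tuple for each $\alpha\in\set{1,2}^p$ and the latter to glue these tuples into a single M-stationary multiplier. First I would apply \cref{lem:astat} to each of the $2^p$ vectors $\alpha\in\set{1,2}^p$: since $\bar x$ is a local minimizer satisfying MPCC-GCQ, this yields, for every such $\alpha$, multipliers $\lambda^\alpha\in\R^l$, $\eta^\alpha\in\R^m$, $\mu^\alpha,\nu^\alpha\in\R^p$ satisfying the system \eqref{eq:astat}. In particular, the sign conditions in the last two lines of \eqref{eq:astat} say precisely that $\mu_i^\alpha\geq0$ whenever $\alpha_i=1$ and $\nu_i^\alpha\geq0$ whenever $\alpha_i=2$, for $i\in I^{00}(\bar x)$; hence $(\mu^\alpha,\nu^\alpha)\in A^\alpha$ in the notation of \cref{lem:schinabeck} with the choice $\hat I:=I^{00}(\bar x)$.

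Next I would invoke \cref{lem:schinabeck} with $\hat I=I^{00}(\bar x)$ and the points $(\mu^\alpha,\nu^\alpha)$ just constructed. This produces a point of $B$, i.e.\ (by Carathéodory, or simply by the definition of a convex hull of finitely many points) coefficients $t_\alpha\geq0$ with $\sum_{\alpha\in\set{1,2}^p}t_\alpha=1$, such that $(\bar\mu,\bar\nu):=\sum_\alpha t_\alpha(\mu^\alpha,\nu^\alpha)$ satisfies \eqref{eq:m_stat} for every $i\in I^{00}(\bar x)$, which is exactly condition \eqref{eq:mstat_00}. The crucial point is that I would then reuse \emph{the same} coefficients $t_\alpha$ to define $\bar\lambda:=\sum_\alpha t_\alpha\lambda^\alpha$ and $\bar\eta:=\sum_\alpha t_\alpha\eta^\alpha$, so that all four groups of multipliers are formed from one common convex combination.

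It then remains to verify that $(\bar\lambda,\bar\eta,\bar\mu,\bar\nu)$ satisfies all of \eqref{eq:wstat_x}--\eqref{eq:mstat_00}, which is routine. The gradient equation \eqref{eq:wstat_x} has exactly the same form as \eqref{eq:astat_x}, is affine in the multipliers, and, because $\sum_\alpha t_\alpha=1$, the term $\nabla f(\bar x)$ is reproduced with coefficient one; hence the convex combination of the solutions of \eqref{eq:astat_x} solves \eqref{eq:wstat_x}. The remaining sign and support conditions ($\bar\lambda_i\geq0$ for $i\in I^g(\bar x)$, $\bar\lambda_i=0$ for $i\in I^l\setminus I^g(\bar x)$, $\bar\mu_i=0$ for $i\in I^{+0}(\bar x)$, $\bar\nu_i=0$ for $i\in I^{0+}(\bar x)$) are each preserved under nonnegative combinations because every tuple $(\lambda^\alpha,\eta^\alpha,\mu^\alpha,\nu^\alpha)$ already satisfies them, and \eqref{eq:mstat_00} holds by construction via \cref{lem:schinabeck}. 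Thus $\bar x$ is M-stationary. I do not expect a genuine obstacle here: all the difficulty has been absorbed into \cref{lem:astat} (the polyhedral/Farkas computation) and \cref{lem:schinabeck} (the combinatorial convex-combination argument), and the only thing one must be careful about is to fix one common tuple of convex-combination weights for all of $\bar\lambda,\bar\eta,\bar\mu,\bar\nu$ simultaneously, which is automatic from the way \cref{lem:schinabeck} is phrased.
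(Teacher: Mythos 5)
Your proposal is correct and follows essentially the same route as the paper's own proof: apply \cref{lem:astat} for every $\alpha\in\set{1,2}^p$, invoke \cref{lem:schinabeck} with $\hat I=I^{00}(\bar x)$ to obtain a convex combination satisfying \eqref{eq:mstat_00}, and transfer \eqref{eq:wstat_x}--\eqref{eq:wstat_0+} by convexity. Your explicit remark that the same weights $t_\alpha$ must be used for $\bar\lambda$ and $\bar\eta$ is a point the paper leaves implicit, but it is the intended reading and causes no difficulty.
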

\begin{proof}
	For all $\alpha\in\set{1,2}^p$, let
	$(\lambda^\alpha,\eta^\alpha,\mu^\alpha,\nu^\alpha)\in\R^{l+m+2p}$
	be the multipliers generated by \cref{lem:astat}.
	By applying \cref{lem:schinabeck} with
	$\hat I=I^{00}(\bar x)$, we find a convex combination
	$(\bar\lambda,\bar\eta,\bar\mu,\bar\nu)\in\R^{l+m+2p}$
	of these multipliers such that \eqref{eq:mstat_00} is satisfied.
%
%
	The conditions \eqref{eq:wstat_x}--\eqref{eq:wstat_0+}
	follow from \eqref{eq:astat_x}--\eqref{eq:astat_0+} by convexity.
\end{proof}

\section{Conclusion and outlook}
\label{sec:conclusion}

We provided a new proof for the M-stationarity
of local minimizers of MPCCs under MPCC-GCQ.
Although this result was already known,
the new proof uses only basic and well-known tools
from nonlinear programming theory.
This new elementary proof for M-stationarity
was enabled by proving
a (to the best of our knowledge previously open) conjecture
from \cite{Schinabeck2009:1} in \cref{lem:schinabeck}.

In the future, it would also be interesting
to apply this approach to other problem classes
from disjunctive programming 
and to investigate to what extend the ideas
from this paper can be generalized.

In Sobolev or Lebesgue spaces,
the limiting normal cone turned out to be not as effective as in finite
dimensional spaces for obtaining stationarity conditions
for complementarity-type optimization problems, 
see \cite{HarderWachsmuth2017:1,MehlitzWachsmuth2016:1}.
Thus, it would be interesting to know whether
the new elementary method from this paper can provide ideas for
possible approaches for better stationarity conditions 
of complementarity-type optimization problems 
in Sobolev and Lebesgue spaces.

%

\bibliographystyle{jnsao}
\bibliography{m_stat_elementary_proof.bib}
\end{document}